\pdfoutput=1

\documentclass[final]{siamltex}

\usepackage{amsmath,amssymb,amsfonts,cite,color}

\usepackage{mathtools,enumerate}

\newcommand{\eps}{\varepsilon}
\newcommand{\qed}{\endproof}

\newtheorem{Theorem}{theorem}
\newtheorem{remark}{remark}

\numberwithin{equation}{section}

\begin{document}

\title{\bf Eventual self-similarity of solutions for the diffusion
  equation with nonlinear absorption and a point source}

\author{Peter V. Gordon\thanks{Department of Mathematics, University
    of Akron, Akron, Ohio 44325, USA ({\tt pgordon@uakron.edu)}} \and
  Cyrill B. Muratov\thanks{Department of Mathematical Sciences, New
    Jersey Institute of Technology, University Heights, Newark, NJ
    07102, USA ({\tt muratov@njit.edu})}}

\maketitle

\begin{abstract}
  This paper is concerned with the transient dynamics described by the
  solutions of the reaction-diffusion equations in which the reaction
  term consists of a combination of a superlinear power-law absorption
  and a time-independent point source. In one space dimension,
  solutions of these problems with zero initial data are known to
  approach the stationary solution in an asymptotically self-similar
  manner. Here we show that this conclusion remains true in two space
  dimensions, while in three and higher dimensions the same conclusion
  holds true for all powers of the nonlinearity not exceeding the
  Serrin critical exponent. The analysis requires dealing with
  solutions that contain a persistent singularity and involves a
  variational proof of existence of ultra-singular solutions, a
  special class of self-similar solutions in the considered problem.
\end{abstract}

\begin{keywords} 
  Self-similarity, diffusion-absorption, source-sink models, morphogen
  gradients.
\end{keywords}

\begin{AMS}
  35C06, 35K61, 35B40, 35Q92
\end{AMS}

\pagestyle{myheadings} \thispagestyle{plain} \markboth{P. V. GORDON
  AND C. B. MURATOV}{EVENTUAL SELF-SIMILARITY FOR
  DIFFUSION-ABSORPTION}

\section{Introduction}\label{sec:intro}

Many problems of mathematical biology can be modeled by
reaction-diffusion equations containing strongly localized source
terms. For example, in embryonic development locally produced
molecules called morphogens spread through the developing tissue,
producing graded concentration profiles that provide positional
information guiding morphogenesis \cite{ms02}.  The simplest mechanism
of morphogen gradient formation involves diffusion and degradation of
morphogen molecules throughout the tissue
\cite{rmss:dc06,llnw09,othmer09,wkg09,sb12}.  Typically, the
degradation is a nonlinear process due to the presence of regulatory
feedbacks \cite{ab06}. In one of the generic regulatory mechanisms the
degradation rate is an increasing function of the morphogen
concentration, which in the simplest case can be modeled by a power
law \cite{eldar03}.  After a suitable non-dimensionalization, this
mechanism gives rise to the following canonical reaction-diffusion
model \cite{eldar03,kpbkbjg07,twh07,ybnrpssb09,othmer09,llnw09,wkg09}:
\begin{eqnarray}\label{eq:u}
  u_t=\Delta u-u^p+\alpha \delta(x).
\end{eqnarray}
Here, $u=u(x,t)\in \mathbb{R}^+$ and represents the morphogen
concentration, $x\in\mathbb{R}^d$ is the spatial coordinate, $t\in
\mathbb{R}^+$ is time, $\delta(x)$ is the spatial Dirac
delta-function, and $p>1$ and $\alpha>0$ are the order of the
degradation reaction and the source strength, respectively. Note that
although our interest in this type of equations arose in the context
of morphogen gradients
\cite{gsbms:pnas11,mgs:pre11,gm:nhm12,gms:jcp13}, this equation is
quite general and may also arise, for example, in the studies of
intracellular calcium dynamics \cite{smith01,saftenku12}, the dynamics
of photo-generated charge carriers in semiconductors
\cite{smith,ko:book}, or the kinetics of a non-conserved order
parameter in Ginzburg-Landau models of phase transitions in the
presence of heterogeneities \cite{hohenberg77,levanyuk79}.

In this paper we are interested in the transient dynamics described by
\eqref{eq:u} with zero initial data, $u(x,0)=0$ (for the precise
definition of the solution, see section \ref{sec:main}).  Recently, we
studied this problem in one space dimension, $d=1$
\cite{gm:nhm12,mgs:pre11}. In this case we proved that the solution of
this problem is monotone increasing in $t$ for any $x$ and converges
to the unique stationary solution.  Moreover, we showed that the ratio
of the solution of the initial value problem to the stationary
solution, written as a function of time $t$ and the parabolic
similarity variable $x/\sqrt{t}$, approaches some limit profile
independent of $\alpha$ when $t\to\infty$.  This has to do with the
existence of an {\em ultra-singular solution} for problem
\eqref{eq:u}, namely a function $U: \mathbb{R}^d\backslash\{0\}\times
\mathbb{R}^+ \to \mathbb{R}^+$ verifying
\begin{eqnarray}\label{eq:using}
  \left\{
\begin{array}{ll}
  U_t=\Delta U-U^p & (x,t)\in\mathbb{R}^d\backslash\{0\}\times
  \mathbb{R}^+, \\ 
  \displaystyle\lim_{t\to 0} U(x,t)=0 & x
  \in\mathbb{R}^d\backslash\{0\}, \\ 
  \displaystyle\lim_{|x|\to 0} |x|^{\frac{2}{p-1}}U(x,t)>0  \qquad &
  t\in\mathbb{R}^+. 
\end{array}\right.
\end{eqnarray}
Furthermore, this function is self-similar in the sense that
$|x|^{\frac{2}{p-1}}U(x,t)$ can be written as a function of the
similarity variable only. In addition, this function is, equivalently,
the limit of the solutions of \eqref{eq:u} with zero initial data as
$\alpha\to \infty$.

Our choice of terminology is motivated by another class of
self-similar solutions, called {\em very singular solutions}, which
arise in a closely related problem that is given by \eqref{eq:u} with
$\alpha = 0$ and the initial data in the form of a constant multiple
of a delta-function \cite{galaktionov85,brezis86,escobedo87}. These
solutions satisfy \eqref{eq:using}, except for the last condition, and
are instead smooth for all $t > 0$, blow up faster than the heat
kernel at the origin as $t \to 0$ and go to zero uniformly as
$t\to\infty$. Moreover, these solutions are, in a suitable sense, the
long time attractors of the solutions of the above problem
\cite{kamin85}.  In contrast, the ultra-singular solutions constructed
by us contain a persistent singularity at the origin for all $t > 0$,
as can be seen from the third condition in \eqref{eq:using}, and
approach from below the unique stationary solution as
$t\to\infty$. Let us also note that existence of very singular
solutions depends delicately on the power $p$ of the nonlinearity and
the space dimension $d$. In fact, non-trivial solutions of the problem
considered in \cite{brezis86} exist if and only if $p < p_c$, where
$p_c := (d + 2)/d$ is the Fujita critical exponent \cite{brezis83}. In
particular, in one space dimension very singular solutions exist if
and only if $p < 3$. At the same time, as we showed in
\cite{gm:nhm12}, ultra-singular solutions exist for all $p > 1$ in one
space dimension.

Coming back to our problem, it is natural to expect that the solutions
of \eqref{eq:u} with zero initial data would exhibit the same type of
behavior in dimensions $d>1$, namely, that they approach the unique
stationary solution of \eqref{eq:u} from below as $t \to \infty$, and
do so in an asymptotically self-similar fashion. However, in going to
arbitrary dimensions one encounters the difficulty that for $d \geq 2$
non-negative stationary solutions that satisfy \eqref{eq:u}
classically away from the origin exist if and only if $p < p^*$
\cite{brezis87,brezis80,veron81,benilan04} (for a recent overview of
results on nonlinear elliptic equations involving measures, see
\cite{ponce12}), where
\begin{eqnarray}\label{eq:p*}
  p^* := \left\{
\begin{array}{ll}
  \infty & d=2, \\
  \frac{d}{d-2} & d\ge 3,
\end{array}
\right.
\end{eqnarray}
is often referred to as the Serrin critical exponent.  Therefore, one
can only hope to extend our one-dimensional results to the case $d> 1$
when $p<p^*$. In fact, the latter condition is also necessary for the
existence of distributional solutions of \eqref{eq:u} with a
persistent singularity at the origin \cite{baras84}.

In this paper, we study equation \eqref{eq:u} with zero initial
condition, $d\ge 2$ and $1<p<p^*$.  Under these assumptions we prove
that the one-dimensional picture described above extends to higher
dimensions.  The main difference with the one-dimensional case is that
all solutions of \eqref{eq:u} with zero initial data are unbounded
near the origin for all times $t>0$. Note that singular solutions of
\eqref{eq:u} (with $\alpha \geq 0$) have been considered in the
literature in a variety of contexts
\cite{brezis83,brezis86,baras84,oswald88,kamin85}. In particular,
Baras and Pierre developed a general existence theory for solutions of
\eqref{eq:u} in which the delta-function in the right-hand side is
replaced by a general bounded Radon measure on $\mathbb R^d \times
\mathbb R^+$ \cite{baras84}. They provided a necessary and sufficient
condition for existence in terms of parabolic capacity.

Although the framework of \cite{baras84} can be straightforwardly
extended to the problem under consideration, in view of the special
form of the source term in the right-hand side of \eqref{eq:u} we
chose to give a self-contained proof of existence, uniqueness,
regularity and some qualitative properties of solutions of
\eqref{eq:u} with zero initial data. These results are presented in
Theorem \ref{t:eu}.  We then proceed to establish existence of a
self-similar solution that satisfies \eqref{eq:using}.  The proof is
an adaptation of our variational proof in \cite{gm:nhm12} for the
one-dimensional case (for a related variational proof in a different
context, see also \cite{escobedo87}). The result is contained in
Theorem \ref{t:exun}.  Finally, our main result is on the asymptotic
self-similar behavior of solutions of \eqref{eq:u} with zero initial
data given in Theorem \ref{t:2}.  There we prove that the long-time
limit of solutions can be characterized by the self-similar solution
constructed in Theorem \ref{t:exun}.

From the point of view of the applications, our results apply in two
space dimensions irrespectively of the power $p$ of the
nonlinearity. In particular, they indicate that the approach of the
morphogen concentration to the steady state in a developing
epithelium, which to a first approximation is a two-dimensional layer
of cells, is asymptotically self-similar and, therefore, exhibits
robustness in the case of a point source in the plane. Such robustness
was previously demonstrated for these types of problems in one space
dimension \cite{eldar03,mgs:pre11,gsbms:pnas11}. Our results also
apply in the case of three space dimensions and a physically important
case of a second-order degradation reaction, $p = 2$. At the same
time, our results break down in the case $d = 3$ and $p = 3$,
corresponding to, e.g., the Ginzburg-Landau equation for the
non-conserved order parameter in the presence of a localized
heterogeneity \cite{levanyuk79}.

Our paper is organized as follows. In Sec. \ref{sec:main}, we
introduce the notation used throughout the paper, present a few
auxiliary facts and state the main results. Sections \ref{s:3},
\ref{s:4} and \ref{s:5} are then devoted to the proofs of each the
three theorems in Sec. \ref{sec:main}, respectively.

\section{Preliminaries and the main results}
\label{sec:main}
 
In this section, we introduce the notations, collect a number of known
results that will be useful throughout the paper and state our main
theorems.

We begin with a discussion of the stationary solutions for
\eqref{eq:u}. By a stationary solution for a given $\alpha > 0$, we
mean a non-negative function $v_\alpha \in L^p(\mathbb R^d)$ which
satisfies
\begin{eqnarray}\label{eq:v}
  -\Delta v_\alpha+v_\alpha^p=\alpha \delta(x) \qquad  \text{in} \quad
  \mathcal D'(\mathbb{R}^d). 
\end{eqnarray} 
Solutions of \eqref{eq:v} enjoy the following properties:

\begin{enumerate}[i)]
\item For each $\alpha>0$, $d \geq 2$ and $1<p<p^*$, problem
  \eqref{eq:v} has a unique positive solution that belongs to
  $C^{\infty}(\mathbb{R}^d\backslash\{0\})$, is radially-symmetric, and
  behaves as
\begin{align}
  \label{eq:va0}
  v_\alpha(x) \simeq \alpha \Phi(x), \qquad |x| \ll 1,
\end{align}
where
\begin{align}
  \label{eq:Gam}
  \Phi(x) :=
  \begin{dcases}
    {1 \over 2 \pi} \ln {1 \over |x|}, & d = 2, \\
    {1 \over (d - 2) |\mathbb S^{d-1}|} \cdot {1 \over |x|^{d-2}}, & d
    \geq 3,
  \end{dcases}
\end{align}
is the fundamental solution of the Laplace's equation.

\item $v_{\alpha}(x)$ is an increasing function of $\alpha$ for each
  $x \not= 0$ fixed and approaches from below the function
  $v_{\infty}(x)$ as $\alpha\to\infty$, where
\begin{eqnarray}\label{eq:vinf}
  v_{\infty}(x) :=\frac{c(p,d)}{|x|^{\frac{2}{p-1}}}, \quad
  c(p,d)=\left[\frac{2}{p-1}\left(\frac{2p}{p-1} -
      d \right)\right]^{\frac{1}{p-1}}.   
\end{eqnarray}
Furthermore, $v_\infty(x)$ is the only classical solution of
\eqref{eq:va0} on $\mathbb R^d \backslash \{0\}$ which grows faster
than $\Phi(x)$ as $|x| \to 0$.
\end{enumerate}

\medskip

\noindent For these and other results related to the solutions of
\eqref{eq:va0}, we refer the reader to
\cite{brezis87,brezis80,veron81,benilan04,ponce12} and further
references therein.

We next turn to assigning the meaning to the solutions of \eqref{eq:u}
with zero initial data. As these solutions are expected to exhibit a
singularity of the type $\alpha \Phi(x)$ near the origin, they should
be understood in an appropriate distributional sense. To illustrate
this point, let us first consider the {\em linearized} version of
\eqref{eq:u}. Extending the solution by zero for $t \leq 0$ and
setting $\alpha = 1$ for simplicity, we are lead to the following
equation:
\begin{align}
  \label{eq:I}
  I_t = \Delta I + \delta(x) \theta(t) \qquad \text{in} \quad \mathcal
  D'(\mathbb R^{d+1}),
\end{align}
where $\theta(t)$ is the Heaviside step function. The solution of this
equation reads
\begin{align}
  \label{eq:Isol}
  I(x, t) = \int_0^t {\theta(t) \over [4 \pi (t - s)]^{d/2}} e^{-
    {|x|^2 \over 4 (t - s)}} ds = \frac{1}{4 \pi^{d/2}} {\theta(t)
    \over |x|^{d - 2}} \Gamma \left( \frac{d}{2} - 1, {|x|^2 \over 4
      t} \right),
\end{align}
where $\Gamma(a, x)$ is the incomplete Gamma-function
\cite{abramowitz}. Note that $I(x, t)$ is a monotonically increasing
function of $t$ that approaches $\Phi(x)$ from below for each $x \not
= 0$ as $t \to \infty$ when $d \geq 3$, while it blows up
logarithmically for all $x \not = 0$ as $t \to \infty$ in the case $d
= 2$. Also note that for $t > 0$ we have the following asymptotic
behavior of $I(x, t)$:
\begin{align}
  \label{eq:Iasymp}
  I(x, t) \simeq
  \begin{dcases}
    t^{1-\frac{d}{2}} \Phi(x / \sqrt{t}), & |x| \ll \sqrt{t}, \\
    2^{2-d} \pi^{-d/2} t^{2-\frac{d}{2}} e^{-\frac{|x|^2}{4 t}}
    |x|^{-2}, & |x| \gg \sqrt{t}.
  \end{dcases}
\end{align}
In particular, a straightforward calculation shows that $I(\cdot, t)
\in L^p(\mathbb R^d)$ for every $t > 0$, provided that $1 \leq p <
p^*$. This dictates that for those values of $p$ the nonlinear term in
\eqref{eq:u} is expected to be ``dominated'' by the delta-function
near the origin. The latter observation is key to the well-posedness
of the initial value problem for \eqref{eq:u}.

We next introduce the definition of solutions of \eqref{eq:u} with
zero initial data as distributions for which the nonlinearity in
\eqref{eq:u} also makes sense.

\begin{definition}
  \label{d:solu}
  We call $u(x, t)$ a solution of \eqref{eq:u} with zero initial data,
  if for any $T > 0$ the map $t \mapsto u(\cdot, t)$ belongs to $C([0,
  T]; L^p(\mathbb R^d))$, $u(\cdot, 0) = 0$ and
  \begin{align}
    \label{eq:ud}
    \int_0^T \int_{\mathbb R^d} u \left( \varphi_t + \Delta \varphi -
      |u|^{p-1} \varphi \right) dx \, dt + \alpha \int_0^T \varphi(0,
    t) dt = 0 \qquad \forall \varphi \in C^\infty_c(\mathbb R^d \times
    (0, T)).
  \end{align}
\end{definition}

Our first result concerns existence, uniqueness, regularity and
qualitative properties of positive solutions from Definition
\ref{d:solu} (see also the general framework in \cite{baras84}).

\begin{Theorem}\label{t:eu}
  For each $\alpha>0$, $d \geq 2$ and $1<p<p^*$, there is a unique
  positive solution $u(x, t)$ of \eqref{eq:u} with zero initial data
  in the sense of Definition \ref{d:solu}. Moreover, it is
  radially-symmetric, solves
  \begin{align}
    \label{eq:u0}
    u_t = \Delta u - u^p
  \end{align}
  classically for all $(x,t) \in \mathbb{R}^d\backslash\{0\}\times
  \mathbb R^+$ and obeys $u(x, t) \simeq \alpha \Phi(x)$ for all $t >
  0$ and $|x| \ll 1$.  In addition, for each $x \not = 0$ the map $t
  \mapsto u(x,t)$ is non-decreasing, and $u(x, t) \to v_{\alpha}(x)$
  from below as $t\to \infty$.
\end{Theorem}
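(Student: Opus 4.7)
The plan is to approximate the singular source by mollification, obtain uniform a priori bounds by comparison with the stationary solution $v_\alpha$, and pass to the limit using local parabolic regularity away from the origin. Uniqueness, monotonicity in $t$, radial symmetry, and the asymptotic convergence to $v_\alpha$ then follow from standard comparison and Kato-type arguments adapted to this distributional setting.

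\textbf{Regularization and a priori estimates.} Let $\eta_\epsilon$ be a smooth, non-negative, radially symmetric mollifier of unit mass supported in $B_\epsilon(0)$, and let $u^\epsilon$ denote the classical solution of
\begin{align*}
u_t^\epsilon = \Delta u^\epsilon - (u^\epsilon)^p + \alpha\, \eta_\epsilon(x), \qquad u^\epsilon(x,0) = 0,
\end{align*}
furnished by standard parabolic theory; it is smooth, radial, and positive for $t > 0$. Comparing $u^\epsilon(\cdot, t)$ with $u^\epsilon(\cdot, t+s)$ via the maximum principle shows that $t \mapsto u^\epsilon(x,t)$ is non-decreasing. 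Comparison with the analogously regularized stationary solution $v_\alpha^\epsilon$, together with the observation that $v_\alpha$ itself is a supersolution of the $\epsilon$-regularized stationary problem (since $\Phi \ast \eta_\epsilon \leq \Phi$), gives $0 \leq u^\epsilon \leq v_\alpha^\epsilon \leq v_\alpha$ on $\mathbb R^d \times \mathbb R^+$. Because $v_\alpha \in L^p(\mathbb R^d)$ precisely when $p < p^*$, this yields a uniform $L^p(\mathbb R^d)$-bound on $u^\epsilon$, and therefore on the nonlinearity $(u^\epsilon)^p \in L^1(\mathbb R^d)$, on every time interval $[0,T]$.

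\textbf{Passage to the limit and qualitative properties.} Write $u^\epsilon = \alpha I^\epsilon + w^\epsilon$, where $I^\epsilon$ solves the linearized regularized problem with source $\alpha\eta_\epsilon \theta(t)$, so that $I^\epsilon \to I$ locally in $\mathbb R^d \times \mathbb R^+$ (with $I$ as in \eqref{eq:Isol}), while $w^\epsilon$ satisfies a parabolic equation with $L^1_{\mathrm{loc}}$-bounded right-hand side and zero initial data. Interior Schauder estimates on $(\mathbb R^d \backslash \{0\}) \times \mathbb R^+$ combined with monotonicity in $t$ allow a diagonal extraction of a limit $u = \alpha I + w$, with $w$ bounded and continuous up to the origin. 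The limit $u$ solves \eqref{eq:u0} classically for $x\neq 0$, obeys $u(x,t)\simeq \alpha\Phi(x)$ as $|x|\to 0$ (from the asymptotics of $I$ in \eqref{eq:Iasymp} and continuity of $w$ at $0$), and verifies Definition \ref{d:solu} by dominated convergence with majorant $v_\alpha$. Radial symmetry and monotonicity in $t$ transfer to the limit. Sending $t\to\infty$, the monotone pointwise limit $u_\infty \leq v_\alpha$ is a distributional stationary solution with the same $\alpha\Phi$-singularity at $0$, and the uniqueness stated in property (i) forces $u_\infty = v_\alpha$.

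\textbf{Uniqueness and main obstacle.} If $u_1, u_2$ are two solutions in the sense of Definition \ref{d:solu}, then $w := u_1 - u_2$ satisfies $w_t - \Delta w + (u_1^p - u_2^p) = 0$ distributionally on $\mathbb R^d \times (0,T)$, the singular sources having cancelled. Since $u_1, u_2 \geq 0$ and $s\mapsto s^p$ is monotone on $[0,\infty)$, Kato's inequality yields $\partial_t |w| - \Delta |w| \leq 0$ in $\mathcal D'(\mathbb R^d \times (0,T))$; testing against a non-negative solution of the backward heat equation and using the zero initial datum delivers $w \equiv 0$. The main obstacle throughout is the a priori control of $(u^\epsilon)^p$ near the origin uniformly in $\epsilon$, since $u^\epsilon$ inherits a persistent $\alpha\Phi$-type singularity that does not vanish in the limit; the condition $p < p^*$ is exactly what makes $\Phi \in L^p_{\mathrm{loc}}$, keeps the nonlinearity integrable, and prevents concentration phenomena that would otherwise spoil the distributional identity \eqref{eq:ud}.
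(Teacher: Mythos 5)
Your overall architecture -- mollify the source, prove a uniform $L^p$ bound, pass to the limit, get uniqueness by linearizing the difference of two solutions, and identify the $t\to\infty$ limit through the $\alpha\Phi$ singularity -- is the same as the paper's. But the central a priori estimate, on which everything else rests, is derived from a false comparison. You claim $v_\alpha^\epsilon \le v_\alpha$ on the grounds that $v_\alpha$ is a supersolution of the $\epsilon$-regularized stationary problem ``since $\Phi\ast\eta_\epsilon\le\Phi$''. The superharmonicity of $\Phi$ is irrelevant here: being a supersolution of $-\Delta w + w^p = \alpha\eta_\epsilon$ requires $-\Delta v_\alpha + v_\alpha^p \ge \alpha\eta_\epsilon$ pointwise, whereas $-\Delta v_\alpha + v_\alpha^p = 0$ on $B_\epsilon(0)\setminus\{0\}$, exactly where $\eta_\epsilon\sim\epsilon^{-d}$ is largest. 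In fact the inequality $v_\alpha^\epsilon\le v_\alpha$ is provably false: testing both equations against a cutoff $\chi(x/R)$ and letting $R\to\infty$ (the boundary term vanishes precisely because $p<p^*$) gives the mass identities $\int_{\mathbb R^d}(v_\alpha^\epsilon)^p\,dx = \alpha = \int_{\mathbb R^d} v_\alpha^p\,dx$; since $v_\alpha^\epsilon$ is bounded while $v_\alpha\to\infty$ at the origin, the strict inequality $v_\alpha^\epsilon< v_\alpha$ near $0$ forces $v_\alpha^\epsilon > v_\alpha$ somewhere else. So your chain $0\le u^\epsilon\le v_\alpha^\epsilon\le v_\alpha$ breaks at the last link, and with it the uniform $L^p(\mathbb R^d)$ bound on $u^\epsilon$ and on the nonlinearity.

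The repair is the route the paper actually takes: drop the absorption term and compare $u^\epsilon$ with the solution of the \emph{linear} mollified problem, $u^\epsilon(x,t)\le\alpha\int_{\mathbb R^d} I(x-y,t)\,\eta_\epsilon(y)\,dy$, and then show via the asymptotics \eqref{eq:Iasymp} that this is bounded by $\alpha C_d\, I(x,1+2t)$ uniformly in $\epsilon$; since $I(\cdot,t)\in L^p(\mathbb R^d)$ for $p<p^*$ (with Gaussian decay at infinity, which $\alpha\Phi$ alone would not give you), this supplies the uniform bound. For the stationary approximants one uses the explicit power supersolution $C|x|^{-2/(p-1)}$ with $C$ large, whose excess $\tfrac12 C^p|x|^{-2p/(p-1)}$ dominates $\alpha\|\eta_\epsilon\|_\infty$ on $B_\epsilon$ exactly when $p<p^*$. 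Two smaller points: your assertion that $w^\epsilon=u^\epsilon-\alpha I^\epsilon$ is \emph{bounded} at the origin is also too strong (the forcing is of order $|x|^{-(d-2)p}$, so the correction behaves like $|x|^{2-(d-2)p}$, which is unbounded once $(d-2)p>2$, e.g.\ $d=3$, $2<p<3$); what you need and can prove is only that it is less singular than $\Phi$, which holds for all $p<p^*$. Your Kato-inequality uniqueness argument is a legitimate alternative to the paper's (which writes the difference as $w_t=\Delta w+kw$ with $k$ bounded in $L^{p'}$), provided you justify Kato's inequality for solutions that are merely in $C([0,T];L^p)$.
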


Our next result concerns self-similar solutions of \eqref{eq:using}
that are constructed via the similarity ansatz
\begin{eqnarray}\label{eq:simanz}
  U(x,t) = v_{\infty}(x) \phi(\zeta), \quad \zeta=\ln\left(
    \frac{|x|}{\sqrt{t}}\right).
\end{eqnarray}
Here $0 \leq \phi(\zeta) \leq 1$ is some unknown function, which will
be referred to as the {\em self-similar profile}.  Substituting the
similarity ansatz from \eqref{eq:simanz} into \eqref{eq:using}, after
some algebra we obtain the following equation for the self-similar
profile $\phi$:
\begin{align}\label{eq:phiz}
  \phi^{\prime\prime}+\left(\frac{e^{2\zeta}}{2}-\frac{p+3}{p-1}+d-1\right)
  \phi^{\prime}+
  \frac{2}{p-1}\left(\frac{p+1}{p-1}-d+1 \right)(\phi-\phi^p)=0, \quad
  \zeta \in \mathbb R,
\end{align}
supplemented with the limit behavior
\begin{align}
  \label{eq:phibcz1}
  \lim_{\zeta \to -\infty} \phi(\zeta) = 1, \qquad \lim_{\zeta \to
    -\infty} {d \phi(\zeta) \over d \zeta} = 0,  \\ 
  \lim_{\zeta \to +\infty} \phi(\zeta) = 0, \qquad
  \lim_{\zeta \to +\infty} 
  {d \phi(\zeta) \over d \zeta} = 0.   \label{eq:phibcz2}
\end{align}

Solutions of \eqref{eq:phiz} satisfying \eqref{eq:phibcz1} and
\eqref{eq:phibcz2} will be sought as weak solutions belonging, after
subtracting a function $\eta \in C^\infty(\mathbb R)$ that obeys
\begin{align}
  \label{eq:eta}
  \eta'(\zeta) \leq 0, \quad \zeta \in \mathbb R, \qquad \eta(\zeta) =
  1, \quad \zeta \in (-\infty, 0], \qquad \eta(\zeta) = 0, \quad \zeta
  \in [1, +\infty),
\end{align}
 to the
weighted Sobolev space $H^1(\mathbb R, d \mu)$, which is defined as
the completion of the family of smooth functions with compact support
with respect to the Sobolev norm 
\begin{eqnarray}
  \label{eq:H1}
  ||w||_{H^1(\mathbb R, d \mu)}^2  :=  ||w_\zeta||_{L^2(\mathbb R, d
    \mu)}^2 +  ||w||_{L^2(\mathbb R, d \mu)}^2,
\end{eqnarray}
where $||w||_{L^2(\mathbb R, d \mu)}^2 := \int_\mathbb{R} w^2(\zeta) d
\mu(\zeta)$, and the measure $\mu$ is defined as
\begin{eqnarray}
  \label{eq:muu}
  d \mu(\zeta) := \rho(\zeta) d \zeta, \qquad \rho(\zeta) :=  
  \exp\left\{\frac{e^{2\zeta}}{4}-\left(\frac{p+3}{p-1}-d+1\right) \zeta 
  \right\}.
\end{eqnarray}
This setting allows to view \eqref{eq:phiz} as the Euler-Lagrange
equation of a certain energy functional and prove existence of
solutions via the direct method of calculus of variations (compare
also with \cite{escobedo87}). We have the following result.

\begin{Theorem}\label{t:exun}
  For each $d \geq 2$ and $1 < p < p^*$, there exists a unique weak
  solution $\phi \in H^1_{loc}(\mathbb R)$ of \eqref{eq:phiz} such
  that $\phi - \eta \in H^1(\mathbb R, d \mu)$ and $0 \leq \phi \leq
  1$. Furthermore, $\phi \in C^\infty(\mathbb R)$, satisfies
  \eqref{eq:phiz} classically, and $0 < \phi < 1$. In addition, $\phi$
  is strictly decreasing, satisfies \eqref{eq:phibcz1} and
  \eqref{eq:phibcz2}, and has the following asymptotic behavior:
\begin{eqnarray}\label{eq:asympt}
  && \phi(\zeta)\sim
  \exp\left\{ -\frac{e^{2\zeta}}{4}+\left(\frac{5-p}{p-1}-d+1
    \right)\zeta \right\},  \qquad \zeta\to+\infty . 
\end{eqnarray}
\end{Theorem}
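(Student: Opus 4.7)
Observe that $\rho'/\rho$ coincides with the coefficient of $\phi'$ in \eqref{eq:phiz}, so the equation is in divergence form
$$ (\rho \phi')' + \lambda \rho (\phi - \phi^p) = 0, \qquad \lambda := \frac{2}{p-1}\left(\frac{p+1}{p-1} - d + 1\right), $$
and the assumption $1 < p < p^*$ guarantees $\lambda > 0$. This is the Euler--Lagrange equation of the functional
$$ E[\phi] := \int_{\mathbb R} \left\{ \frac{1}{2}(\phi')^2 + \lambda G(\phi) \right\} d\mu, \qquad G(\phi) := \frac{\phi^{p+1}}{p+1} - \frac{\phi^2}{2} + \frac{p-1}{2(p+1)}, $$
where the additive constant is chosen so that $G(1) = 0$ and $G \geq 0$ on $[0, 1]$. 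I would minimize $E$ on the admissible class $\mathcal{A} := \{\phi : \phi - \eta \in H^1(\mathbb R, d\mu),\ 0 \leq \phi \leq 1\}$, which contains $\eta$ with finite energy thanks to the super-exponential growth of $\rho$ at $+\infty$ and the compact support of $\eta'$.

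\textbf{Existence via the direct method.} For a minimizing sequence $\phi_n \in \mathcal{A}$, boundedness of $E[\phi_n]$ controls $\int (\phi_n')^2 d\mu$; combined with the elementary bounds $G(\phi) \geq c_1 (1 - \phi)^2$ on $[1/2, 1]$ and $G(\phi) \geq c_2 > 0$ on $[0, 1/2]$, one obtains $\int (\phi_n - \eta)^2 d\mu$ bounded (using $0 \leq \phi_n \leq 1$). Extract a weakly convergent subsequence in $H^1(\mathbb R, d\mu)$, passing to the limit by convexity of the gradient term and Fatou's lemma applied to $G(\phi_n) \geq 0$. Variations $\phi_\star + \varepsilon \chi$ with $\chi \in C_c^\infty$ are admissible wherever $\phi_\star \in (0, 1)$, producing a weak solution of the Euler--Lagrange equation there, and standard elliptic bootstrap using the smoothness of $\rho$ yields $\phi_\star \in C^\infty$ on those open sets.

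\textbf{Qualitative properties.} The constant solutions $\phi \equiv 0$ and $\phi \equiv 1$ are both ruled out by the integrability constraints encoded in $\mathcal{A}$, since $\int \eta^2 d\mu$ and $\int (1 - \eta)^2 d\mu$ are infinite at $-\infty$ and $+\infty$, respectively. Consequently, the strong maximum principle applied to $w = 1 - \phi_\star$ and to $\phi_\star$ itself (via the linearizations of \eqref{eq:phiz} around $\phi \equiv 1$ and $\phi \equiv 0$, which are linear elliptic operators with smooth coefficients) shows $0 < \phi_\star < 1$ throughout, so that \eqref{eq:phiz} holds classically on all of $\mathbb R$. Strict monotonicity follows from the ODE: at any interior critical point $\zeta_0$, $\phi_\star''(\zeta_0) = \lambda (\phi_\star^p - \phi_\star)(\zeta_0) < 0$, excluding interior local minima; combined with the square-integrability of $\phi_\star$ at $+\infty$ and of $1 - \phi_\star$ at $-\infty$, which force $\phi_\star \to 0$ at $+\infty$ and $\phi_\star \to 1$ at $-\infty$, this forces $\phi_\star$ to be strictly decreasing, and the derivative limits in \eqref{eq:phibcz1}--\eqref{eq:phibcz2} follow from the asymptotic behavior of the ODE at both ends.

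\textbf{Asymptotics and uniqueness.} For the decay rate at $+\infty$, insert the WKB ansatz $\phi \sim A \exp(-e^{2\zeta}/4 + k\zeta)$ into the linearization of \eqref{eq:phiz} around $\phi = 0$ (neglecting $\phi^p$) and match the coefficient of $e^{2\zeta}$ in the exponent; this forces $k = \frac{4}{p-1} - d = \frac{5-p}{p-1} - d + 1$, recovering \eqref{eq:asympt}. Rigorously, this is confirmed by sub- and super-solution pairs of the form $(1 \pm \varepsilon) \exp(-e^{2\zeta}/4 + k\zeta)$ of the full nonlinear ODE, valid for $\zeta$ sufficiently large, together with a comparison principle argument. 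For uniqueness, suppose $\phi_1, \phi_2$ are two solutions in the stated class: both exhibit \eqref{eq:asympt} with some amplitudes $A_1, A_2 > 0$, and the difference $w = \phi_1 - \phi_2$ satisfies a linear ODE $(\rho w')' = \lambda \rho q(\zeta) w$ with $q$ bounded. The subspace of solutions decaying at the rate \eqref{eq:asympt} is one-dimensional, parameterized by $A$, so a backward shooting/Wronskian argument identifies $\phi_1$ and $\phi_2$ up to this amplitude, and the boundary condition $\phi \to 1$ at $-\infty$ then pins down $A_1 = A_2$, forcing $w \equiv 0$. I expect uniqueness to be the main obstacle, since $E$ is not globally convex (indeed $G$ is concave near $0$), and the sharp one-dimensional decay family at $+\infty$ together with the non-autonomous phase-plane rigidity is the essential ingredient.
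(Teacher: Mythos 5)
Your overall strategy---rewriting \eqref{eq:phiz} in divergence form with the weight $\rho$, minimizing an energy over $\{\phi:\ \phi-\eta\in H^1(\mathbb R,d\mu),\ 0\le\phi\le 1\}$, and extracting the qualitative properties from the maximum principle and a WKB matching at $+\infty$ (your exponent $k=\frac{5-p}{p-1}-d+1$ is the correct one)---is the same as the paper's, which sets up precisely this variational problem in \eqref{eq:E}--\eqref{eq:EE} and defers the details to its one-dimensional predecessor. However, there is a concrete error in your setup that makes the argument fail as written: your functional $E$ is identically $+\infty$ on the admissible class. You normalized the potential so that $G(1)=0$, which makes the integrand vanish at the $-\infty$ end, but then $G(0)=\frac{p-1}{2(p+1)}>0$, and since $G$ is decreasing on $[0,1]$ we have $G(\phi)\ge G(1/2)>0$ wherever $\phi\le 1/2$. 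The constraint $\phi-\eta\in H^1(\mathbb R,d\mu)$ forces $\mu(\{\zeta\ge 1:\ \phi\ge 1/2\})\le 4\int_1^\infty\phi^2\,d\mu<\infty$, while $\mu([1,\infty))=\infty$ because $\rho$ grows super-exponentially; hence $\int G(\phi)\,d\mu\ge G(1/2)\,\mu(\{\zeta\ge 1:\ \phi<1/2\})=+\infty$ for \emph{every} admissible $\phi$, and in particular $E[\eta]=+\infty$, contrary to your claim. The super-exponential growth of $\rho$ at $+\infty$ works against you here, not for you.

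The fix is the $\zeta$-dependent counterterm the paper builds into \eqref{eq:E}: the term proportional to $\eta\,d\mu$ subtracts the value of the potential at the target state at each end ($G(1)=0$ near $-\infty$, $G(0)$ near $+\infty$), so the renormalized integrand is $O((1-\phi)^2)$ at one end and $O(\phi^2)$ at the other and is therefore finite on the admissible class. This is not purely cosmetic: after renormalization the potential near $+\infty$ is approximately $-\frac{\lambda}{2}\phi^2\le 0$, so your coercivity bounds $G\ge c_1(1-\phi)^2$ and $G\ge c_2>0$ no longer apply there, and boundedness from below must instead be obtained by controlling the negative quadratic contribution (this is where the positivity of the prefactor $\frac{p+1}{p-1}-K(d)$ in \eqref{eq:EE}, equivalent to $p<p^*$, is actually used). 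The remaining ingredients of your sketch (exclusion of the constants $0$ and $1$ by the integrability constraints, no interior critical points, the decay rate) are consistent with the paper, though your uniqueness argument is only a sketch: the linear equation for $w=\phi_1-\phi_2$ has a coefficient $q$ depending on both solutions and changing sign across $[0,1]$, so the ``one-dimensional decay family'' claim does not by itself identify the two solutions and needs to be supplemented by a genuine comparison, sliding, or energy argument.
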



Finally, the main result of this paper is the following.

\begin{Theorem}\label{t:2}
  For $\alpha>0$, $d \geq 2$ and $1 < p < p^*$, let $u(x, t)$ be the
  solution of \eqref{eq:u} with zero initial data, and define
  \begin{eqnarray}\label{eq:tm00}
    F(\zeta,t):=\frac{u(x,t)}{v_{\alpha}(x)}, \qquad
    \zeta=\ln\left(\frac{|x|}{\sqrt{t}}\right).  
  \end{eqnarray}
  Then
  \begin{eqnarray}\label{eq:tm01}
    F(\zeta,t)\to \phi(\zeta) \quad \mbox{as} \quad t\to \infty, 
  \end{eqnarray}
  where $\phi(\zeta)$ is as in Theorem \ref{t:exun}.
\end{Theorem}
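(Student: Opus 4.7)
The plan is to exploit the parabolic scale-invariance of the nonlinear part of \eqref{eq:u} and recast the long-time convergence as an $\alpha\to\infty$ limit, which can then be identified with the self-similar solution from Theorem \ref{t:exun}. For each $T > 0$, set $u^T(x, t) := T^{1/(p-1)} u(\sqrt{T}\, x, T t)$; a direct computation combined with uniqueness in Theorem \ref{t:eu} shows $u^T = u_{\alpha_T}$, the solution of \eqref{eq:u} with zero initial data and source $\alpha_T := \alpha T^{(2p - d(p-1))/(2(p-1))}$. The hypothesis $p < p^*$ makes the exponent of $T$ strictly positive, so $\alpha_T \to \infty$ as $T \to \infty$. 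Setting $y = x/\sqrt{t}$ and $\zeta = \ln|y|$, one has
\begin{equation*}
F(\zeta, t) \; = \; \frac{u^t(y, 1)}{t^{1/(p-1)} \, v_\alpha(\sqrt{t}\, y)},
\end{equation*}
and the classical asymptotic $v_\alpha(x)/v_\infty(x)\to 1$ as $|x|\to\infty$ (see \cite{veron81}) implies that the denominator converges to $v_\infty(y)$. Thus the theorem reduces to $u^T(y,1) \to v_\infty(y)\phi(\ln|y|) = U(y,1)$ pointwise in $y\neq 0$ as $T \to \infty$, where $U(x,t) := v_\infty(x)\phi(\ln(|x|/\sqrt{t}))$ is the self-similar ultra-singular solution built from the profile $\phi$ of Theorem \ref{t:exun}.

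By monotonicity of $u_\alpha$ in $\alpha$ (a consequence of the comparison principle applied to \eqref{eq:u}), the family $\{u^T\}_{T > 0}$ is monotone nondecreasing in $T$ for each fixed $(x, t)$, and the uniform bound $u^T \leq v_{\alpha_T} \leq v_\infty$ from Theorem \ref{t:eu} produces a pointwise limit $U^*(x, t) := \lim_{T\to\infty} u^T(x,t) \leq v_\infty(x)$. Standard interior parabolic regularity on compact subsets of $(\mathbb R^d\setminus\{0\})\times\mathbb R^+$ upgrades this to $C^2_{loc}$ convergence, so $U^*$ is a nonnegative, radially symmetric classical solution of $U^*_t = \Delta U^* - (U^*)^p$ off the origin. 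The scaling identity $(u^T)^{T'} = u^{TT'}$ forces $U^*$ to be invariant under the parabolic rescaling and hence self-similar: $U^*(x,t) = v_\infty(x)\tilde\phi(\zeta)$ for some $\tilde\phi : \mathbb R \to [0,1]$ that classically solves \eqref{eq:phiz}. The task is now to identify $\tilde\phi$ with $\phi$.

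The condition $\tilde\phi(-\infty) = 1$ follows by sandwiching: $U^* \leq v_\infty$ from above, while for every fixed $\beta > 0$, monotonicity yields $U^* \geq u_\beta$, whence $\liminf_{t\to\infty} U^*(x,t) \geq v_\beta(x)$ by Theorem \ref{t:eu}; letting $\beta\to\infty$ and invoking $v_\beta \nearrow v_\infty$ gives $U^*(x,t)\to v_\infty(x)$, equivalently $\tilde\phi(-\infty) = 1$. The main obstacle is establishing the decay $\tilde\phi(+\infty) = 0$ together with the integrability $\tilde\phi - \eta \in H^1(\mathbb R, d\mu)$ required for the uniqueness class of Theorem \ref{t:exun}. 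My plan is to derive these from the pointwise inequality $u^T \leq U$, which I would prove by a parabolic maximum principle on $\{|x| > \eps\}\times(0, T_0]$ with $\eps\to 0$ and $T_0\to\infty$: the comparison on the parabolic boundary uses that $u^T$ and $U$ both vanish at $t = 0$, that for $p < p^*$ the singularity $c(p,d)|x|^{-2/(p-1)}$ of $U$ at the origin dominates the milder $\alpha_T\Phi(x)$-type singularity of $u^T$, and that the Gaussian tail of $U$ encoded by \eqref{eq:asympt} matches and dominates the linear heat-kernel upper bound on $u^T$ for $|x|\gg\sqrt t$. Once $u^T \leq U$ is secured, passing to the limit yields $\tilde\phi \leq \phi$, which immediately delivers $\tilde\phi(+\infty) = 0$ together with the required decay. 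The uniqueness statement in Theorem \ref{t:exun} then forces $\tilde\phi = \phi$, i.e., $U^* = U$, and the reduction established in the first paragraph concludes the proof of \eqref{eq:tm01}.
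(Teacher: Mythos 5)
Your proposal takes a genuinely different route from the paper: you rescale parabolically, identify $u^T$ with $u_{\alpha_T}$ where $\alpha_T\to\infty$, extract a self-similar limit by monotonicity and compactness, and then try to identify the limit profile $\tilde\phi$ with $\phi$ via the uniqueness clause of Theorem \ref{t:exun}. The paper instead sandwiches $u$ directly between an explicit super-solution $v_\infty(r)\phi(\ln(r/\sqrt{t+T_1}))$ and an explicit sub-solution $v_0(r)\phi(\ln((r+br^\gamma)/\sqrt{t-T_2}))$ on an exterior domain $r>R$ (Lemmas \ref{lem:super} and \ref{lem:sub}), and passes to the limit in the squeeze. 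The scaling/compactness scheme is attractive and the first several steps (the exponent of $\alpha_T$, the reduction of \eqref{eq:tm01} to $u^T(y,1)\to U(y,1)$, the self-similarity of $U^*$ from $(u^T)^{T'}=u^{TT'}$, and $\tilde\phi(-\infty)=1$ from $U^*\ge u_\beta$ for every $\beta$) are correct. But the identification step contains a genuine gap. The uniqueness in Theorem \ref{t:exun} is stated only within the class $\tilde\phi-\eta\in H^1(\mathbb R,d\mu)$, and the one-sided bound $\tilde\phi\le\phi$ does \emph{not} ``immediately deliver'' membership in that class: the weight $\rho(\zeta)=\exp\{e^{2\zeta}/4-(\tfrac{p+3}{p-1}-d+1)\zeta\}$ blows up exponentially as $\zeta\to-\infty$ (the exponent $\tfrac{p+3}{p-1}-d+1$ is positive for all $d\ge2$, $1<p<p^*$), so the class constrains the \emph{rate} at which $\tilde\phi\to1$ at $-\infty$ and also requires $\tilde\phi'\in L^2(\mathbb R,d\mu)$ — neither of which follows from an upper bound valid at $+\infty$. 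You would need either a matching \emph{lower} barrier near $\zeta=-\infty$ (which is precisely what the paper's sub-solution of Lemma \ref{lem:sub} supplies, and is the hardest part of the paper's proof), or a separate ODE argument showing that any classical monotone solution of \eqref{eq:phiz} converging to $1$ at $-\infty$ does so at the exponential rate dictated by the linearization and hence lands in the weighted class. As written, the crux of the theorem has been deferred to a uniqueness statement whose hypotheses are unverified.

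A secondary, fixable issue is the comparison $u^T\le U$ itself. On the lateral boundary $|x|=\eps$ the domination of $U$ over $u^T$ is clear only for $t\gtrsim\eps^2$; for $t\ll\eps^2$ both functions carry the same Gaussian factor $e^{-\eps^2/(4t)}$ and one must compare algebraic prefactors involving the large constant $\alpha_T$ and the (unspecified) constant in \eqref{eq:asympt} — the powers of $t$ work out only marginally, with equality at $p=p^*$. The clean way out is to compare $u_\alpha(\cdot,t)$ with the time-shifted solution $U(\cdot,t+\tau)$ on a fixed exterior domain $\{r>R\}$, using $u(R,t)<v_\alpha(R)<v_\infty(R)$ and $\phi\to1$ at $-\infty$; this is exactly the paper's Lemma \ref{lem:super}, and letting $\tau\to0$ afterwards recovers your bound. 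With that repair and a completed identification argument your scheme would close, but in its present form it does not constitute a proof.
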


\section{Proof of Theorem \ref{t:eu}}
\label{s:3}

In this section, we prove existence of distributional solutions of
\eqref{eq:u} with zero initial data, as well as their uniqueness,
regularity, asymptotic behavior at the origin and monotonic approach
from below to the solution of \eqref{eq:v}. As we already noted in the
introduction, existence of these singular solutions can be treated
within the general framework developed in \cite{baras84}. For the sake
of completeness, we give a self-contained proof that uses the special
form of the measure appearing in \eqref{eq:u} and follows the ideas
used in the analysis of the elliptic case (for an overview, see, e.g.,
\cite{ponce12}).

To prove existence of solutions of \eqref{eq:u} with zero initial data
in the sense of Definition \ref{d:solu}, we mollify the delta-function
and consider for each $n \in \mathbb N$ the solution $u = u_n(x, t)$
that vanishes at $t = 0$ of the equation
\begin{align}
  \label{eq:un}
  u_t = \Delta u - f_n(u) + \alpha g_n(x), \qquad (x, t) \in \mathbb
  R^d \times \mathbb R^+,
\end{align}
where $g_n(x) = n^d g(n x)$ for some non-negative, radially-symmetric
function $g \in C^\infty_c(\mathbb R^d)$ supported on a unit ball and
satisfying $\int_{\mathbb R^d} g(x) dx = 1$. Also, here $f_n \in
\text{Lip}(\mathbb R)$ is the extended and truncated nonlinearity,
namely,
\begin{align}
  \label{eq:fn}
  f_n(u) := 
  \begin{cases}
    0, & u < 0, \\
    u^p, & 0 \leq u \leq \bar u_n, \\
    \bar u_n^p, & u > \bar u_n ,
  \end{cases}
  \qquad \bar u_n := \left( \alpha \| g_n \|_{L^\infty(\mathbb R^d)}
  \right)^{1/p}.
\end{align}
Short-time existence of a unique solution $u_n$ of \eqref{eq:un} in
the class of bounded uniformly continuous functions then follows from
the classical theory (see, e.g., \cite[Chapter 15]{taylor3},
\cite{lunardi}). Moreover, since $u = 0$ and $u = \bar u_n$ are a sub-
and a super-solution for \eqref{eq:un}, the solution is, in fact,
global in time, and we have $f_n(u_n) = u_n^p$. By standard parabolic
theory, this solution is smooth for all $t \geq 0$. Also, by
construction the solution is radially-symmetric. In addition, by
comparison principle \cite{protter}, for all $t > 0$ the solution is
positive and monotonically increasing in $t$ for each $x \in \mathbb
R^d$. In particular, we have $u_n(x, t) \to v_{\alpha,n}(x) > 0$ from
below for all $x \not= 0$ as $t \to \infty$, where $v_{\alpha,n}$
solves
\begin{align}
  \label{eq:van}
  -\Delta v_{\alpha,n} + v_{\alpha,n}^p = \alpha g_n(x).
\end{align}
Furthermore, it is easy to see that $v_{\alpha,n} \leq
\overline{v}_{\alpha,n}$, where $\overline{v}_{\alpha,n}(x) := C
|x|^{-{2 \over p - 1}}$, for some $C > 0$ depending only on $d$, $p$,
$\alpha$ and $\|g\|_{L^\infty(\mathbb R^d)}$. Indeed, choosing $C$
sufficiently large, we have that the left-hand side of \eqref{eq:van}
is bounded from below by $\frac12 C^p |x|^{-{2 p \over p - 1}} \geq
\alpha \| g \|_{L^\infty(\mathbb R^d)}|x|^{-d} \geq \alpha g_n(x)$ for
all $|x| \leq {1 \over n}$, as long as $p < p^*$. At the same time,
trivially $\frac12 C^p |x|^{-{2 p \over p - 1}} \geq 0 = \alpha
g_n(x)$ for all $|x| > {1 \over n}$. Hence $\overline{v}_{\alpha,n}$
is a super-solution, and the conclusion follows by maximum principle
\cite{pucci}.

We now show that 
\begin{align}
  \label{eq:unb}
  u_n(x, t) \leq \alpha C_d I(x, 1 + 2 t) \qquad \forall (x, t) \in
  \mathbb R^d \times \mathbb R^+,
\end{align}
for some $C_d > 0$ depending only on the dimension and the choice of
$g$, where $I(x, t)$ is defined in \eqref{eq:Isol}. Indeed, by
comparison principle $u_n$ can be estimated from above by the solution
of the linearized equation:
\begin{align}
  \label{eq:unb2}
  u_n(x, t) \leq \alpha \int_{\mathbb R^d} I(x - y, t) g_n(y) dy
  \qquad \forall (x, t) \in \mathbb R^d \times \mathbb R^+.
\end{align}
Therefore, recalling the definitions of $g_n$ and $I$, we have
\begin{align}
  \label{eq:unb3}
  u_n(x, t) & \leq \alpha I(x, 1 + 2 t) \int_{\mathbb R^d} {I(x - y,
    t)
    \over I(x, 1 + 2 t)} g_n(y) dy \notag \\
  & \leq \alpha I(x, 1 + 2 t) \| g\|_{L^\infty(\mathbb R^d)}
  \int_{B_1(0)} {I(\tilde x - \tilde y, n^2 t) \over I(\tilde x, n^2
    (1 + 2 t))} d\tilde y,
\end{align}
where $\tilde x := n x$.  It is then not difficult to see from
\eqref{eq:Iasymp} that the integral in the right-hand side of
\eqref{eq:unb3} is bounded independently of $\tilde x$, $t$ and $n$,
which yields the claim.

The estimate in \eqref{eq:unb} and the fact that $p < p^*$ imply that
$u_n(\cdot, t)$ are uniformly bounded in $L^p(\mathbb R^d)$ for all $t
\in [0, T]$, with $T > 0$ arbitrary. Therefore, the right-hand side of
\eqref{eq:un} is uniformly bounded in $L^1(\mathbb R^d)$ for each $t
\in [0, T]$, and by parabolic theory \cite[Chapter 15]{taylor3} we
also have that $u_n(\cdot, t)$ is uniformly bounded in
$W^{1,q}(\mathbb R^d)$ for any $1 \leq q < d/(d - 1)$. Choosing $q$
sufficiently close to $d/(d-1)$, which is, again, possible for $p <
p^*$, and passing to a subsequence (not relabeled), by
Rellich-Kondrachov theorem we then conclude that $u_n(\cdot, t) \to
u(\cdot, t)$ strongly in $L^p_{loc}(\mathbb R^d)$ as $n \to \infty$
for every $t \in [0, T]$. In fact, the bound in \eqref{eq:unb} implies
that convergence is in $L^p(\mathbb R^d)$. In addition, by Sobolev
embedding and parabolic theory we have uniform H\"older continuity of
$u_n(\cdot, t)$ in $L^p(\mathbb R^d)$ \cite[Chapter 4]{lunardi} and,
hence, upon extraction of a subsequence we have $u_n \to u$ in $C([0,
T]; L^p(\mathbb R^d))$. Thus, we can pass to the limit in the
distributional formulation of \eqref{eq:un} to obtain that $u$
satisfies \eqref{eq:ud}. By construction, $u(\cdot, t)$ is radial. By
the same line of arguments, we also have $v_{\alpha,n} \to v_\alpha$
strongly in $L^p(\mathbb R^d)$ as $n \to \infty$.

Uniqueness of solutions of \eqref{eq:ud} follows from parabolic
theory, noting that the difference $w(x, t)$ of any two solutions of
\eqref{eq:ud} satisfies
\begin{align}
  \label{eq:ww}
  w_t = \Delta w + k(x, t) w \quad \text{in} \quad \mathcal D'(\mathbb
  R^d \times \mathbb R^+),
\end{align}
where $k(\cdot, t)$ is uniformly bounded in $L^{p'}(\mathbb R^d)$,
with $p' = p/(p - 1)$, for all $t \in [0, T]$, and is, hence,
zero. Note that uniqueness also implies that $u$ is the full limit of
$u_n$ as $n \to \infty$. Regularity outside the origin is also
standard in view of the estimate in \eqref{eq:unb}, which implies that
$u(\cdot, t)$ is uniformly bounded on $\mathbb R^d \backslash B_R(0)$
for any $R > 0$. Furthermore, by the bootstrap argument $u_n(\cdot,
t)$ converges to $u(\cdot, t)$ uniformly in $\mathbb R^d \backslash
B_R(0)$ as $n \to \infty$. In particular, the limit $u(x, t)$ of
$u_n(x, t)$ is non-decreasing in $t$ for all $x \not = 0$ and
approaches some stationary solution $v(x) \leq \limsup_{n \to \infty}
v_{\alpha,n}(x)$ as $t \to \infty$. At the same time, by elliptic
regularity we also have $v_{\alpha,n}(x) \to v_\alpha(x)$ for all $x
\in \mathbb R^d \backslash \{0\}$ as $n \to \infty$. It then follows
that $v(x) \leq v_\alpha(x)$.

Finally, the asymptotic behavior of $u(\cdot, t)$ near the origin
follows from \eqref{eq:Iasymp} and the fact that $u - \alpha I$ solves
forced heat equation with the forcing term of order $|x|^{-s}$ near
the origin for some $0 < s < d$ and is, therefore, less singular than
$\Phi$ there.  Combining this with monotonicity of $u(x, t)$ in $t$
and \eqref{eq:Iasymp}, we then conclude that $u(x, t) \simeq \alpha
\Phi(x)$ for $|x| \ll 1$ uniformly in $t$, which implies that $v =
v_\alpha$. \qed

\section{Proof of Theorem \ref{t:exun}}
\label{s:4}

Observe that \eqref{eq:phiz} is the Euler-Lagrange equation for the
energy functional
\begin{align}
  \label{eq:E}
  \mathcal E[\phi] := & \int_{\mathbb R} \Bigg\{ \frac12 \left( {d
      \phi \over d \zeta} \right)^2 + \left( {1 \over p - 1} - {d - 1
      \over p + 1} \right) \eta \notag \\
  & - \left( { 1 \over (p - 1)^2} - {d - 1 \over p^2 - 1} \right)
  \phi^2 \left( p + 1 - 2 \phi^{p-1} \right) \Bigg\} d\mu(\zeta),
\end{align}
which is well-defined for all $\phi - \eta \in H^1(\mathbb R, d
\mu)$. This energy can be written as
\begin{align}
  \label{eq:EE}
  \mathcal E[\phi] = \int_{\mathbb R} \Bigg\{ \frac12 \left( {d \phi
      \over d \zeta} \right)^2 + \left( {p + 1 \over p - 1} - K(d)
  \right) \left[ {\eta \over p + 1} - {\phi^2 \left( p + 1 - 2
        \phi^{p-1} \right) \over p^2 - 1} \right] \Bigg\} d\mu(\zeta),
\end{align}
where
\begin{eqnarray}
K(d)=\left\{
\begin{array}{ll}
  1 & d=2, \\
  \frac{p^*+1}{p^*-1} & d\ge 3.
\end{array}
\right.
\end{eqnarray}
Clearly, the term multiplying the square bracket in \eqref{eq:EE} is
positive for all $p < p^*$. This means that we can apply the proof in
\cite[Theorem 2.1]{gm:nhm12} verbatim to prove existence and qualitative properties
of solutions of \eqref{eq:phiz} in the considered class of
functions. In particular, the decay estimate follows as in
\cite[Remark 1]{gm:nhm12}. \qed

\section{Proof of Theorem \ref{t:2}}
\label{s:5}

We start with the following simple observation.  Letting $R>0$,
consider the problem
\begin{eqnarray}\label{eq:w}
\left\{
\begin{array}{ll}
  w_t=w_{rr} + {d - 1 \over r} w_r -w^p &
  (r,t)\in (R, \infty) \times (0,\infty), \\ 
  w(R,t)=u(R, t)& t\in (0,\infty),\\
  w(r,0)=0 & r \in(R, \infty),
\end{array}\right.
\end{eqnarray}
where, with some abuse of notation, for any $r > 0$ we take $u(r, t)$
to be the value of the solution $u(x,t)$ of \eqref{eq:u} with $|x| =
r$.  By regularity of $u(R, t)$ (see Theorem \ref{t:eu}) and standard
parabolic theory, problem \eqref{eq:w} has a unique classical
solution. Therefore, by uniqueness we have $w(r,t)=u(r,t)$ for $r\ge
R$ and $t\ge 0$.  The advantage of considering problem \eqref{eq:w}
instead of \eqref{eq:u} is that solutions of \eqref{eq:w} are
classical and thus classical comparison principle applies.  We also
note that the stationary version of \eqref{eq:w}:
\begin{eqnarray}\label{eq:wst}
  \left\{
\begin{array}{ll}
  - W_{rr}-\frac{d-1}{r}W_r+W^p =0& r\in(R,\infty), \\
  W(R)=v_\alpha(R),&
\end{array}\right.
\end{eqnarray}
where, with the same abuse of notation, $v_\alpha(r)$ is the solution
of \eqref{eq:v} written as a function of the radial coordinate, has a
unique solution satisfying $W(r)\to 0$ as $r\to \infty$ (see, e.g.,
\cite[Theorem 4.3.2]{pucci}).  Therefore, $W(r)=v_\alpha(r)$ for $r\ge
R$.

The proof of Theorem \ref{t:2} is based on a construction of explicit
sub- and super-solutions for problem \eqref{eq:w}.  Let
\begin{eqnarray}
  N[w] := w_t-w_{rr}-\frac{d-1}{r}w_r+w^p.
\end{eqnarray}
We say that $\overline w$ is a super-solution of $w$ if it solves the
differential inequality
 \begin{eqnarray}\label{eq:wsup}
   \left\{
     \begin{array}{ll}
       N[\overline w]\ge 0 & (r,t)\in(R,\infty)\times(0,\infty) \\
       \overline w(R,t)\ge u(R,t)& t\in(0,\infty)\\
       \overline w(r,0)\ge 0 & r\in (R,\infty)
     \end{array}\right.
 \end{eqnarray}
 Similarly, we say that $\underline w$ is a sub-solution of $w$ if
 \begin{eqnarray}\label{eq:wsub}
 \left\{
 \begin{array}{ll}
 N[\underline w]\le 0 & (r,t)\in(R,\infty)\times(0,\infty) \\
 \underline w(R,t)\le u(R,t)& t\in(0,\infty)\\
 \underline w(r,0)\le 0 & r \in(R,\infty)
 \end{array}\right.
 \end{eqnarray}
 Then by classical comparison principle \cite{protter,pucci} the
 solution of problem \eqref{eq:w} obeys
 \begin{eqnarray}
   \underline w(r,t) \le w(r,t)\le \overline w(r,t) \quad
   (r,t)\in[R,\infty)\times[0,\infty). 
 \end{eqnarray}
 Likewise, we define radial sub- and super-solutions for
 \eqref{eq:wst}. Namely, let
\begin{eqnarray}\label{eq:M}
  M[W]=-W_{rr}-\frac{d-1}{r}W_r+W^p
\end{eqnarray}
Then the functions $\overline W$ and $\underline W$ are called super-
and sub-solutions of \eqref{eq:wst}, respectively, if they satisfy
\begin{eqnarray}\label{eq:wstsup}
  \left\{
 \begin{array}{ll}
   M[\overline W]\ge 0 & r\in(R,\infty) \\
   \overline W(R)\ge v_\alpha(R)& 
 \end{array}\right.
 \end{eqnarray}
and
 \begin{eqnarray}\label{eq:wstsub}
 \left\{
 \begin{array}{ll}
   M[\underline W]\le 0 & r\in(R,\infty) \\
   \underline W(R)\le v_\alpha(R)& 
 \end{array}\right.
 \end{eqnarray}
 respectively. Again, by the maximum principle \cite{pucci} we have
\begin{eqnarray}
  \underline{W}(r)\le W(r)\le \overline{W}(r) \quad r\in[R,\infty).
\end{eqnarray}

In the following lemma, we construct explicitly a sub- and a
super-solution for the stationary problem \eqref{eq:wst}.

\begin{lemma} \label{l:vr}
  Let $R>0$ and $\alpha>0$.  Then there exists $0<\bar
  \gamma<1$ such that for all $\bar \gamma \le \gamma < 1$ there
  exists $b>0$ such that the function
  \begin{eqnarray}\label{eq:v*}
    v_0(r):=\frac{c(p,d)}{(r+b r^{\gamma})^{\frac{2}{p-1}}}
  \end{eqnarray}
  is a sub-solution for problem \eqref{eq:wst}.  Moreover, the
  function $v_{\infty}(r)$ defined by \eqref{eq:vinf} is a
  super-solution for this problem.
\end{lemma}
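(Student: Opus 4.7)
The plan is to verify the differential inequalities for the two candidates directly. For the super-solution, $v_\infty$ is by construction the explicit singular radial solution of $M[v]=0$ on $(0,\infty)$, so $M[v_\infty]\equiv 0$ on $(R,\infty)$. The boundary inequality $v_\infty(R)\ge v_\alpha(R)$ then follows from property (ii) of the stationary solutions recalled in Section \ref{sec:main}, namely $v_\alpha<v_\infty$ pointwise on $\mathbb R^d\setminus\{0\}$. This disposes of the super-solution claim.

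For the sub-solution, I would do a direct computation. Writing $\alpha_*:=2/(p-1)$, $c:=c(p,d)$ and $f(r):=r+br^{\gamma}$, so that $v_0=cf^{-\alpha_*}$, and using $\alpha_* p=\alpha_*+2$, after factoring out $f^{-\alpha_*-2}$ one obtains
\begin{equation*}
  M[v_0]=f^{-\alpha_*-2}\left[-c\alpha_*(\alpha_*+1)(f')^2+c\alpha_* f f''+\tfrac{c\alpha_*(d-1)}{r}f f'+c^{p}\right].
\end{equation*}
Expanding $f'=1+b\gamma r^{\gamma-1}$ and $f''=b\gamma(\gamma-1)r^{\gamma-2}$ and collecting, the bracket splits into three pieces: a constant part, a part of order $br^{\gamma-1}$ and a part of order $b^2 r^{2(\gamma-1)}$. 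The crucial cancellation is that the constant part vanishes identically by virtue of the defining identity $c^{p-1}=\alpha_*(\alpha_*+2-d)$, i.e.\ precisely the fact that $v_\infty$ solves $M[v_\infty]=0$.

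What remains is an expression of the form $c\alpha_* b\,A(\gamma)\,r^{\gamma-1}+c\alpha_* b^{2}\gamma\,B(\gamma)\,r^{2(\gamma-1)}$ with
\begin{equation*}
  A(\gamma)=\gamma^{2}+(d-4-2\alpha_*)\gamma+(d-1),\qquad B(\gamma)=(d-2)-\alpha_*\gamma.
\end{equation*}
Evaluating at $\gamma=1$ gives $A(1)=2(d-2-\alpha_*)$ and $B(1)=(d-2)-\alpha_*$, both strictly negative because the hypothesis $p<p^*$ is equivalent to $\alpha_*>d-2$ (which is exactly what makes $c(p,d)$ real and positive). By continuity I can fix $\bar\gamma\in(0,1)$ close enough to $1$ so that $A(\gamma)<0$ and $B(\gamma)<0$ throughout $[\bar\gamma,1)$; for any such $\gamma$ and any $b>0$ the bracket is strictly negative, and hence $M[v_0]<0$ on $(R,\infty)$. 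The boundary condition is then handled separately: since $v_0(R)=c/(R+bR^{\gamma})^{\alpha_*}\to 0$ as $b\to\infty$ while $v_\alpha(R)>0$, the inequality $v_0(R)\le v_\alpha(R)$ is secured by taking $b$ sufficiently large.

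The only genuinely delicate point is the cancellation of the constant term in the bracket, which is forced by the definition of $c(p,d)$; once that is in hand, the sign analysis reduces to the two leading decaying corrections and explains why one must take $\gamma<1$ rather than $\gamma=1$ (at $\gamma=1$ the leading correction is of the same order as the cancelled constant, and no such sub-solution of this explicit form exists).
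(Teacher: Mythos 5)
Your proof is correct and follows essentially the same route as the paper's: the direct substitution, the cancellation of the constant term via $c^{p-1}=\alpha_*(\alpha_*+2-d)$, and your two sign conditions are exactly the paper's conditions $s(\gamma)\le 0$ and $\gamma\ge (d-2)(p-1)/2$ rewritten in the variable $\alpha_*=2/(p-1)$ (your $A$ is the paper's $s$ and $B(\gamma)\le 0$ is the paper's first inequality), with the same choice of large $b$ for the boundary inequality and the same super-solution argument. Only your closing parenthetical is inaccurate: at $\gamma=1$ one has $v_0=(1+b)^{-2/(p-1)}v_\infty$ and $M[\lambda v_\infty]=\lambda(\lambda^{p-1}-1)v_\infty^p<0$ for $0<\lambda<1$, so that function is still a sub-solution --- the restriction $\gamma<1$ is really needed later so that $v_0/v_\infty\to 1$ as $r\to\infty$ in the proof of Theorem \ref{t:2}; this aside does not affect the validity of your argument.
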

\begin{proof}
  Let us start with the construction of a sub-solution. First, observe
  that direct substitution of $v_0$ defined by \eqref{eq:v*} into
  \eqref{eq:M} gives
  \begin{eqnarray}
    && M[v_0]=\frac{2bc(p,d)}{(p-1)^2(r+b
      r^{\gamma})^{\frac{2p}{p-1}}}\Bigg\{ \gamma
    b\left[(d-2)(p-1)-2\gamma \right]r^{2(\gamma-1)} \nonumber \\ 
    && +(p-1)\left[
      \gamma^2-\left(\frac{4p}{p-1}-d\right)\gamma+
      d-1\right]r^{\gamma-1}\Bigg\}.  
\end{eqnarray}
In order for $v_0$ to satisfy the condition $M[v_0]\le 0$, it is
sufficient to choose the parameter $\gamma$ such that the following
two inequalities hold:
\begin{eqnarray} \label{eq:gam1}
\gamma\ge \gamma_1:= \frac{(d-2)(p-1)}{2},
\end{eqnarray} 
and
\begin{eqnarray}\label{eq:gam2}
s(\gamma)\le 0,
\end{eqnarray}
where
\begin{eqnarray}\label{eq:gam22}
s(\gamma)= \gamma^2-\left(\frac{4p}{p-1}-d\right)\gamma+d-1.
\end{eqnarray}
Indeed, first observe that $\gamma_1=0$ for $d=2$ and
$\gamma_1=1-(d-2)(p^*-p)/2<1$ for $d\ge 3$.  Next, since $s(\gamma)$
is a parabola having the following properties: $s(0)=d-1 > 0$,
$s(1)=-\frac{2d}{p-1}\left(1-\frac{p}{p^*}\right)<0$ and $
s^{\prime}(\gamma)\le s^{\prime}(1)< -2/(p-1)<0$, we conclude that
$s(\gamma)\le 0$ for all $1>\gamma\ge \gamma_2$, where $\gamma_2<1$ is
the smallest root of the equation $s(\gamma)=0$.  Setting $\bar
\gamma=\max\{\gamma_1,\gamma_2\}$ we have $M[v_0]\le 0$.  Next, we
choose $b$ sufficiently large so that an inequality $v(R)\ge v_0(R)$
holds. Thus, both conditions in \eqref{eq:wstsub} are satisfied, and
$v_0(r)$ is a sub-solution for problem \eqref{eq:wst}.

Now, let us verify that $v_{\infty}(r)$ is a super-solution for
$W$. First, by direct substitution we have $M[v_{\infty}]=0$.
Moreover, $v_{\infty}(r)>v_{\alpha}(r)$ for any $\alpha$ and $r>0$
(see section 2). Thus, $v_{\infty}(r)$ is a super-solution.
\end{proof}

\medskip

\begin{remark}
  It follows from Lemma \ref{l:vr} that for each pair $R>0$,
  $\alpha>0$ one can choose $0<\gamma<1$ and $b > 0$ so that
  $v_0(r)<v_{\alpha}(r)<v_{\infty}(r)$ for all $r\ge R$.
\end{remark}

\medskip

Now we construct a super-solution for problem \eqref{eq:w}.

\begin{lemma}\label{lem:super}
  There exists $T_1>0$ such that the function
  \begin{eqnarray}\label{eq:bw}
    \overline w(r,t):=v_{\infty}(r)\phi(y), \quad
    y=\log\left(\frac{r}{\sqrt{t+T_1}}\right) 
  \end{eqnarray} 
  is a super-solution of \eqref{eq:w}
\end{lemma}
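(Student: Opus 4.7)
The key insight is that $\overline w(r,t)$ is nothing but the ultra-singular self-similar solution $U$ constructed in Theorem~\ref{t:exun}, evaluated at the shifted time $t+T_1$ and viewed radially. Accordingly, I would verify the three conditions in \eqref{eq:wsup} separately, and only the boundary condition will constrain the choice of $T_1$.

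For the differential inequality, I would start from the fact that $\phi \in C^\infty(\mathbb R)$ solves \eqref{eq:phiz} classically (Theorem~\ref{t:exun}). Reversing the substitution that produced \eqref{eq:phiz} from the ansatz \eqref{eq:simanz} shows that $v_\infty(|x|)\phi(\ln(|x|/\sqrt{t}))$ solves $U_t = \Delta U - U^p$ classically on $(\mathbb R^d\setminus\{0\})\times(0,\infty)$. By time-translation invariance, the shifted function $\overline w$ solves the same radial equation on $(R,\infty)\times(0,\infty)$, so $N[\overline w] = 0 \ge 0$. The initial condition $\overline w(r,0) \ge 0$ is automatic because $v_\infty > 0$ and $0 \le \phi \le 1$, regardless of $T_1$.

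The only step that actually does work is the boundary inequality $\overline w(R,t) \ge u(R,t)$ for all $t > 0$. Here I would combine two bounds already in hand: by Theorem~\ref{t:eu}, $u(R,t)\le v_\alpha(R)$ for every $t$, and by property (ii) of the stationary solutions the strict inequality $v_\alpha(R) < v_\infty(R)$ holds for every finite $\alpha$. Setting $\kappa := v_\alpha(R)/v_\infty(R) \in (0,1)$, it suffices to arrange
\begin{equation*}
\phi\!\left(\log\frac{R}{\sqrt{t+T_1}}\right) \ge \kappa \quad \text{for every } t \ge 0.
\end{equation*}
Since Theorem~\ref{t:exun} gives that $\phi$ is strictly decreasing with $\phi(-\infty)=1$ and $\phi(+\infty)=0$, there is a unique $\zeta_\kappa\in\mathbb R$ with $\phi(\zeta_\kappa)=\kappa$, and the inequality reduces to $\log(R/\sqrt{t+T_1})\le \zeta_\kappa$. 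The supremum in $t\ge 0$ of the left-hand side is attained at $t=0$, so any choice $T_1 \ge R^2 e^{-2\zeta_\kappa}$ works uniformly.

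The main obstacle is conceptual rather than computational: one must know that the self-similar ansatz is an \emph{exact} (not asymptotic) solution of the radial PDE on $r>0$, $t>0$, and that the strict separation $v_\alpha(R) < v_\infty(R)$ together with the strict monotonicity of $\phi$ and its prescribed limits are available. Both inputs are supplied by Theorems~\ref{t:eu} and \ref{t:exun}; beyond them the argument is essentially a one-parameter adjustment of $T_1$.
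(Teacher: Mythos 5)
Your proof is correct and follows essentially the same route as the paper: identify $\overline w(r,t)=U(r,t+T_1)$ so that $N[\overline w]=0$, note the initial condition is trivial, and use $u(R,t)\le v_\alpha(R)<v_\infty(R)$ together with $\phi(\zeta)\to 1$ as $\zeta\to-\infty$ to pick $T_1$ large enough for the boundary inequality. The only difference is that you make the admissible choice $T_1\ge R^2e^{-2\zeta_\kappa}$ explicit, which the paper leaves implicit.
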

\begin{proof}
  First, observe that $\overline w(r,t)=U(r,t+T_1)$ and thus verifies
  $N[\overline w]=0$.  Next, since $v_{\infty}(R)> v(R)>u(R,t)$ and
  $\phi(\zeta)\to 1$ as $\zeta \to -\infty$, one can always choose
  $T_1$ large enough, so that
  $v_{\infty}(R)\phi(\log\left(R/\sqrt{t+T_1}\right))>u(R,t)$ for all
  $t>0$.  As a result, $\overline w$ is a super-solution.
\end{proof}

Finally, we give a construction of a sub-solution for problem
\eqref{eq:w}.  This construction requires the following elementary
lemma.

\begin{lemma}\label{l:100}
  Let $\phi(\zeta)$ be as in Theorem \ref{t:exun}. Then, there exists
  $\zeta_0>0$ such that $\phi(\zeta)\le |\phi^{\prime}(\zeta)|$ for
  all $\zeta \ge \zeta_0$.
\end{lemma}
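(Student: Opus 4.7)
The plan is to exploit the self-adjoint form of the ODE \eqref{eq:phiz}. A direct computation shows that the weight $\rho$ defined in \eqref{eq:muu} satisfies $\rho'(\zeta)/\rho(\zeta) = e^{2\zeta}/2 - (p+3)/(p-1) + d - 1$, which is precisely the coefficient of $\phi'$ in \eqref{eq:phiz}. Consequently \eqref{eq:phiz} can be rewritten as
\begin{equation}
\bigl(\rho(\zeta)\phi'(\zeta)\bigr)' = -C_0\, \rho(\zeta)\bigl(\phi(\zeta) - \phi(\zeta)^p\bigr), \qquad C_0 := \frac{2}{p-1}\left(\frac{p+1}{p-1} - d + 1\right),
\end{equation}
where $C_0 > 0$ for $1 < p < p^*$ by the same elementary positivity check as in \eqref{eq:EE}.

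I would set $h(\zeta) := \rho(\zeta)\phi'(\zeta)$. Since $0 < \phi < 1$ gives $\phi - \phi^p > 0$, the identity above yields $h'(\zeta) < 0$, so $h$ is strictly decreasing. By Theorem \ref{t:exun}, $\phi$ is strictly decreasing, so $\phi' \le 0$; pointwise strict negativity $\phi' < 0$ can be read off from \eqref{eq:phiz} because any interior zero $\phi'(\zeta_*) = 0$ would force $\phi''(\zeta_*) = -C_0(\phi(\zeta_*) - \phi(\zeta_*)^p) < 0$, making $\zeta_*$ a local maximum of $\phi$, in contradiction with monotonicity. Therefore $h(\zeta) < 0$ for all $\zeta$, and since $h$ is non-increasing, for any fixed $\zeta_1 > 0$ one has $|h(\zeta)| \ge |h(\zeta_1)| =: c > 0$ for every $\zeta \ge \zeta_1$.

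The last step is to combine this with the decay estimate \eqref{eq:asympt}. Writing $c_1 := (5-p)/(p-1) - d + 1$ and $c_2 := (p+3)/(p-1) - d + 1$, the super-exponential terms $\pm e^{2\zeta}/4$ in $\log\phi$ and $\log\rho$ cancel exactly, and the remaining linear term has coefficient $c_1 - c_2 = -2$; hence $\rho(\zeta)\phi(\zeta) \sim e^{-2\zeta}$, and in particular $\rho(\zeta)\phi(\zeta) \to 0$ as $\zeta \to +\infty$. Therefore
\begin{equation}
\frac{|\phi'(\zeta)|}{\phi(\zeta)} = \frac{|h(\zeta)|}{\rho(\zeta)\phi(\zeta)} \ge \frac{c}{\rho(\zeta)\phi(\zeta)} \longrightarrow +\infty,
\end{equation}
so $|\phi'(\zeta)| \ge \phi(\zeta)$ for all $\zeta \ge \zeta_0$ with $\zeta_0$ sufficiently large, as required.

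I do not expect any serious obstacle in this argument; the self-adjoint reformulation reduces the lemma to a one-line computation once combined with \eqref{eq:asympt}. The only mildly delicate point is the pointwise strict negativity of $\phi'$, which is handled by the maximum-principle observation above, and the cancellation of the dominant exponential in $\rho\phi$, which is automatic from the exact form of the asymptotic profile given by Theorem \ref{t:exun}.
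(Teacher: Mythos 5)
Your proof is correct, and it rests on the same two ingredients as the paper's argument --- an exact-derivative reformulation of \eqref{eq:phiz}, available because the coefficient of $\phi'$ is precisely $(\log\rho)'$, combined with the sharp decay asymptotics \eqref{eq:asympt} --- but the decomposition and the way the conclusion is extracted are genuinely different. The paper groups the terms as $\bigl[\phi' + \bigl(\tfrac{e^{2\zeta}}{2}-\tfrac{p+3}{p-1}+d-1\bigr)\phi\bigr]' = \bigl[e^{2\zeta}-C_0(1-\phi^{p-1})\bigr]\phi$ (with your $C_0$), integrates from $\zeta$ to $+\infty$, uses \eqref{eq:phibcz2} and \eqref{eq:asympt} to kill the boundary term, and exploits positivity of the right-hand side only for large $\zeta$ to get $-\phi' > \bigl(\tfrac{e^{2\zeta}}{2}-\tfrac{p+3}{p-1}+d-1\bigr)\phi$, hence $\phi \le 4e^{-2\zeta}|\phi'|$. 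You instead work with the Sturm--Liouville flux $h=\rho\phi'$, whose strict monotonicity is global (it needs only $0<\phi<1$ and $C_0>0$), and then divide by $\rho\phi\sim e^{-2\zeta}$. The two quantities are close relatives --- the paper's bracket equals $(\rho\phi)'/\rho$ while yours is $\rho\phi'$ --- and both routes deliver the same strengthening $|\phi'|/\phi\gtrsim e^{2\zeta}$ of the stated lemma. Your version has the mild advantages that the sign of $h'$ is known on all of $\mathbb R$ and that \eqref{eq:asympt} enters only through the elementary cancellation $\bigl(\tfrac{5-p}{p-1}\bigr)-\bigl(\tfrac{p+3}{p-1}\bigr)=-2$; the paper's version avoids introducing $\rho$ explicitly but must check that the integrated right-hand side is eventually positive and that $e^{2\zeta}\phi\to 0$. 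Your maximum-principle justification of the pointwise strict inequality $\phi'<0$ (which the paper simply reads off from Theorem \ref{t:exun}) is also sound, since an interior zero of $\phi'$ would be a strict local maximum of $\phi$, contradicting strict monotonicity.
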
 
\begin{proof}
  First, rewrite \eqref{eq:phiz} as follows:
  \begin{align}
    \left[ \phi^{\prime}+\left(\frac{e^{2\zeta}}{2}-\frac{p+3}{p-1}
        + d-1 \right)\phi \right]^{\prime}=  
    \left[ e^{2\zeta}-\frac{2}{p-1}\left(\frac{p+1}{p-1}-
        d + 1 \right)(1-\phi^{p-1}) \right] \phi.  
\end{align}
Integrating this equation from $\zeta$ to $\infty$ and taking into
account \eqref{eq:phibcz2} and \eqref{eq:asympt}, we have
\begin{eqnarray}
  -\phi^{\prime}(\zeta)-\left(\frac{e^{2\zeta}}{2}-
    \frac{p+3}{p-1}+d-1 \right)\phi(\zeta)>0,
\end{eqnarray}
provided that $\zeta$ is sufficiently large. In view of the fact that
$\phi^\prime(\zeta)<0$ (see Theorem \ref{t:exun}), we obtain
 \begin{eqnarray}
 \phi(\zeta)\le 4e^{-2\zeta} |\phi^{\prime}(\zeta)|,
 \end{eqnarray}
 for large enough $\zeta$, which gives the desired result.
\end{proof}

\begin{lemma}\label{lem:sub}  
  Given $R>0$ and $\alpha>0$, there exist $T_2>0$, $b>0$ and
  $0<\gamma<1$ such that the function
\begin{eqnarray}\label{eq:vp}
\underline w(r,t)=\left\{\begin{array}{ll}
0 & t<T_2, \\
v_0(r)\phi(z) & t\ge T_2,
\end{array}\right.
\end{eqnarray}
where $v_0$ is given by \eqref{eq:v*} and
\begin{eqnarray}
  z=\ln\left(\frac{r+br^{\gamma}}{\sqrt{t-T_2}}\right),
\end{eqnarray}
is a sub-solution for problem \eqref{eq:w}.
\end{lemma}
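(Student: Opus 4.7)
The plan is to verify each of the three conditions comprising \eqref{eq:wsub}. The initial condition $\underline w(r,0)=0$ is immediate from \eqref{eq:vp}. For the boundary condition at $r=R$, the case $t<T_2$ is trivial since $\underline w(R,t)=0\le u(R,t)$; for $t\ge T_2$ I would invoke the monotone convergence $u(R,t)\uparrow v_\alpha(R)$ from Theorem \ref{t:eu} together with the Remark after Lemma \ref{l:vr}, which yields $v_0(R)<v_\alpha(R)$ for $b,\gamma$ chosen as in that lemma. Picking $T_2$ large enough that $u(R,t)\ge v_0(R)$ for all $t\ge T_2$, and using $0\le\phi\le 1$, gives $\underline w(R,t)=v_0(R)\phi(z(R,t))\le v_0(R)\le u(R,t)$.

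The crux is the differential inequality $N[\underline w]\le 0$. For $t<T_2$, $\underline w\equiv 0$, so $N[\underline w]=0$. The apparent discontinuity at $t=T_2$ is harmless: for fixed $r>R$, $z(r,t)\to+\infty$ as $t\to T_2^+$ and $\phi(+\infty)=0$ by \eqref{eq:phibcz2}, so $\underline w$ is continuous at $T_2$; moreover $\phi'<0$ and $\partial_t z<0$ give $\partial_t\underline w\ge 0$, so no downward jump occurs and the classical comparison principle applies across $t=T_2$. For $t>T_2$, a chain-rule expansion and regrouping to isolate the stationary radial operator $M$ of \eqref{eq:M} and the radial heat operator $L[\phi]:=\phi_t-\phi_{rr}-\tfrac{d-1}{r}\phi_r$ give
\begin{align*}
N[v_0\phi] \;=\; \phi\,M[v_0]\;+\;v_0\,L[\phi]\;-\;2 v_0'(r)\,\phi'(z)\,z_r\;+\;v_0^p\,\phi\,(\phi^{p-1}-1).
\end{align*}
The first summand is non-positive by Lemma \ref{l:vr}; the last is non-positive since $0\le\phi\le 1$ and $p>1$; and the cross term is non-positive because $v_0'<0$, $\phi'<0$ (Theorem \ref{t:exun}) and $z_r=(1+b\gamma r^{\gamma-1})/(r+br^\gamma)>0$.

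The main obstacle is controlling $v_0\,L[\phi]$. Using $z_t=-1/[2(t-T_2)]$, the explicit $z_r$, the corresponding $z_{rr}$, and the ODE \eqref{eq:phiz} to eliminate $\phi''$ (noting $e^{2z}=(r+br^\gamma)^2/(t-T_2)$), $L[\phi]$ reduces to an explicit linear combination of $\phi'(z)$ and $(\phi-\phi^p)$ with $r$-dependent coefficients. In the special limit $b=0$ (so $\xi(r):=r+br^\gamma$ becomes $r$ and $v_0=v_\infty$), $\underline w$ coincides with the time-shifted ultra-singular solution $U(r,t-T_2)$, for which $N[\underline w]=0$: the three summands $v_\infty L[\phi]$, $-2v_\infty'\phi_r$ and $v_\infty^p\phi(\phi^{p-1}-1)$ cancel identically thanks to the algebraic identity $c(p,d)^{p-1}=\tfrac{2}{p-1}\bigl(\tfrac{p+1}{p-1}-d+1\bigr)$ linking the amplitude of $v_\infty$ to the $\phi$-ODE. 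The task therefore reduces to showing that for $b>0$ the residual in $v_0 L[\phi]-2v_0'\phi_r+v_0^p\phi(\phi^{p-1}-1)$ is dominated by the strictly negative contribution of $\phi M[v_0]$, whose leading piece for large $r$ is proportional to $(p-1)\,s(\gamma)\,r^{\gamma-1}$ with $s(\gamma)<0$ for $\gamma\in(\gamma_2,1)$ from the proof of Lemma \ref{l:vr}.

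To establish this uniformly in $z\in\mathbb R$, I would split into the two regions $\{z\le\zeta_0\}$ and $\{z>\zeta_0\}$. On the first, $\phi$, $|\phi'|$ and $\phi-\phi^p$ are uniformly bounded and the estimate reduces to elementary bounds. On the second, Lemma \ref{l:100} yields $\phi-\phi^p\le\phi\le|\phi'|$, so the $(\phi-\phi^p)$-contributions can be absorbed into $|\phi'|$-contributions, reducing the sign check to a single coefficient multiplying $\phi'(z)$. This perturbative comparison — showing that the negative leading term from $M[v_0]$ dominates all residual contributions from the chain-rule expansion uniformly in $(r,t)\in[R,\infty)\times(T_2,\infty)$ for appropriately chosen $b,\gamma,T_2$ — is the step I expect to require the bulk of the work, and it is the natural radial analogue of the one-dimensional argument in \cite{gm:nhm12}.
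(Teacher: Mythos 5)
Your proposal is correct and follows essentially the same route as the paper: you treat $\underline w$ as a perturbation of the exact time-shifted ultra-singular solution (so that $N[\underline w]$ is proportional to $b$), reduce the sign check to a linear combination of $\phi'(z)$ and $\phi$-terms, split into $z\le\zeta_0$ and $z>\zeta_0$ via Lemma \ref{l:100}, take $\delta=1-\gamma$ small, and handle the boundary condition at $r=R$ through the monotone convergence $u(R,t)\uparrow v_\alpha(R)$ and Lemma \ref{l:vr}. The "bulk of the work" you defer is exactly what the paper records explicitly, namely the identity $N[\underline w]=-c(p,d)\,b\,(r+br^\gamma)^{-2p/(p-1)}\bigl[(-\phi')A_1+\phi A_2\bigr]$ and the verification that both the $r^{\gamma-1}$ and the $b\,r^{2(\gamma-1)}$ coefficients of $\Psi$ are non-negative in each region, uniformly in $b$ and $T_2$, once $\gamma$ is close enough to $1$.
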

\begin{proof}
  Observe first that by \eqref{eq:asympt} and the fact that $\phi\in
  C^{\infty}(\mathbb{R})$ the function defined by \eqref{eq:vp} is
  smooth for all $r\ge R$ and $t>0$.  Next, let us show that
  $N[\underline w]\le 0$. For $t\le T_2$ this is trivial. For $t>T_2$,
  after a very tedious but straightforward computation, we have:
  \begin{eqnarray}
    N[\underline
    w]=-\frac{c(p,d)b}{(r+br^\gamma)^{\frac{2p}{p-1}}}\left[(-\phi^{\prime})(z)
      A_1(r,z)+\phi(z) A_2(r,z)\right], 
  \end{eqnarray}
  where
  \begin{eqnarray}
    && A_1(r,z)= \left(\gamma
      e^{2z}-(d-1-\gamma)\delta\right)r^{\gamma-1} 
    +b\left(\frac{\gamma^2}{2}e^{2z}-(d-2)\delta\right)r^{2(\gamma-1)},
    \\ 
    &&A_2(r,z)=\frac{2}{p-1}\Big\{\left[2\gamma\left(\frac{p+1}{p-1}
        -d+1 \right)\phi^{p-1}-(d-1-\gamma)\delta\right]r^{\gamma-1}\nonumber\\  
    &&+b\left[\gamma^2\left(\frac{p+1}{p-1}-d+1\right)\phi^{p-1}
      -(d-2)\delta\right] r^{2(\gamma-1)}\Big\},
\end{eqnarray}
and 
\begin{eqnarray}
\delta=1-\gamma.
\end{eqnarray}
Thus, all we need to show is that $\Psi(r,z)=(-\phi^{\prime})(z)
A_1(r,z)+\phi(z) A_2(r,z)\ge 0$.  In order to show that this condition
indeed holds, assume first that $z<\zeta_0$, where $\zeta_0$ is chosen
as in Lemma \ref{l:100}. In this case, by Theorem \ref{t:exun} we have
$\phi\ge k_1>0$ and $|\phi^{\prime}|\le k_2$ for some positive
constants $k_1$ and $k_2$.  Therefore
\begin{eqnarray}
  && \Psi(r,z)\ge
  \frac{2}{p-1}\left[2\gamma\left(\frac{p+1}{p-1}-
      d+1 \right)k_1^{p-1}-\left(1+\frac{k_2(p-1)}{2}\right)
    (d-1-\gamma)\delta\right]r^{\gamma-1}+  
  \nonumber \\ 
  &&\frac{2b}{p-1}\left[\gamma^2\left(\frac{p+1}{p-1}
      -d + 1 \right)k_1^{p-1}-\left(1+\frac{k_2(p-1)}{2}
    \right)(d-2)\delta\right]  
  r^{2(\gamma-1)},
\end{eqnarray}
and thus $\Psi(r,z)\ge 0$ independently of $b$ and $T_2$ for all $r\ge
R$ and $z<\zeta_0$, provided that $\delta$ is sufficiently small.  In
the case when $z\ge \zeta_0$ we have, by Lemma \ref{l:100}, that
$\phi<|\phi^{\prime}|$, which implies
\begin{eqnarray}
  && \Psi(r,z)\ge  |\phi^{\prime}(z)|\Bigg[\left(\gamma
    -\left(1+\frac{2}{p-1}\right)(d-1-\gamma)\delta\right)r^{\gamma-1} \nonumber \\ 
  &&+b\left(\frac{\gamma^2}{2}-\left(1+\frac{2}{p-1}\right)
    (d-2)\delta\right)r^{2(\gamma-1)}\Bigg],
\end{eqnarray}
and thus $\Psi(r,z)\ge 0$ for all $r\ge R$ and $z\ge \zeta_0$ for
$\delta$ small enough independently of $b$ and $T_2$.  Therefore, we
conclude that one can choose $\gamma<1$ sufficiently close to unity so
that $N[\underline w]\le 0$ for all $r\ge R$ and $t>0$ independently
of $b$ and $T_2$.

Now we choose $T_2$ and $b$ large enough so that $u(R,t)>2v(R)/3$ for
$t>T_2$ and $v_0(R)=v(R)/2$, which is always possible, since by
Theorem \ref{t:eu} for each $R>0$ the function $u(R,t)$ is a monotone
increasing function of $t$ and $\lim_{t\to\infty} u(R,t)=v(R)$, and
$v_0$ is a decreasing function of $b$ that vanishes as
$b\to\infty$. In view of this observation and the fact that $\phi<1$,
we have
\begin{eqnarray}
\underline w(R,t)<u(R,t),
\end{eqnarray}
for all $t > 0$. As a result, $\underline w$ given by \eqref{eq:vp} is
a sub-solution for \eqref{eq:w}.
\end{proof}

\medskip

\emph{Proof of Theorem \ref{t:2}.}  Recall first that by construction
\begin{eqnarray}
  \underline w(r,t)\le u(r,t)\le \overline w(r,t),
\end{eqnarray}
for all $t\ge 0$ and $r\ge R$. Therefore, by Lemmas \ref{lem:sub} and
\ref{lem:super}, for $t>T_2$ and $r>R$ we have
\begin{eqnarray}\label{eq:tmm}
\underline w(r,t)/v_{\alpha}(r)\le F(\zeta,t)\le \overline w(r,t)/v_{\alpha}(r),
\end{eqnarray}
where $F$ and $\zeta$ and defined by \eqref{eq:tm00}. Using this
inequality, after straightforward algebraic computations we obtain
\begin{eqnarray}
  &&F(\zeta,t)\le
  \frac{v_{\infty}(r)}{v_{\alpha}(r)}\phi\left(\zeta-\frac12
    \ln\left(1+T_1/t\right)\right),  \label{eq:tm1a} 
  \\ 
  && F(\zeta,t)\ge \frac{v_0(r)}{v_{\alpha}(r)}\phi\left[ \zeta
    +\ln
    \left(\frac{1+be^{-(1-\gamma)\zeta}
        /t^{(1-\gamma)/2}}{\sqrt{1-T_2/t}}\right)\right].     \label{eq:tm1} 
\end{eqnarray}
It is also not difficult to see that
\begin{eqnarray}\label{eq:tm2}
  H(\zeta,t):=v_0(r)/v_{\infty}(r)=\frac{1}{1+
    b e^{-(1-\gamma)\zeta}/t^{(1-\gamma)/2}}.  
\end{eqnarray}
Taking the limit as $t\to\infty$ in \eqref{eq:tm1a}, \eqref{eq:tm1}
and \eqref{eq:tm2}, we obtain \eqref{eq:tm01}.  \qed

\section*{Acknowledgements} This work was supported, in part, by NSF
via grant DMS-1119724. CBM was also partially supported by NSF via
grants DMS-0908279 and DMS-1313687.  The work of PVG was also
partially supported by a grant 317882 from the Simons Foundation, and
by the US-Israel Binational Science Foundation via grant 2012057.  The
authors are grateful to V. V. Matveev and V. Moroz for valuable
discussions, and to S. Y. Shvartsman for suggesting this problem to
us.

\bibliographystyle{siam}


\bibliography{selfconv2}
\end{document}